\newtheorem{theorem}{Theorem}[section]
\newtheorem{lemma}[theorem]{Lemma}
\theoremstyle{definition}
\newtheorem{remark}[theorem]{Remark}
\numberwithin{equation}{section}
\providecommand{\norm}[1]{\lVert#1\rVert}
\newcommand\restr[2]{{
  \left.\kern-\nulldelimiterspace 
  #1 
  \vphantom{\big|} 
  \right|_{#2} 
  }}
\newcommand{\wh}{\widehat}
\newcommand{\lb}{\left(}
\newcommand{\vp}{\varphi}
\newcommand{\ve}{\varepsilon}
\newcommand{\rb}{\right)}
\newcommand{\PD}{\partial}
\newcommand{\wt}{\widetilde}
\newcommand{\Rb}{\mathbb{R}}
\newcommand{\Beq}{\begin{equation}}
\newcommand{\Eeq}{\end{equation}}
\newcommand{\beq}{\begin{equation*}}
\newcommand{\eeq}{\end{equation*}}
\newcommand{\bal}{\begin{align}}
\newcommand{\eal}{\end{align}}
\renewcommand{\O}{\Omega}
\newcommand{\n}{\nabla}
\newcommand{\bpr}{\begin{proof}}
	\newcommand{\epr}{\end{proof}}
\renewcommand{\o}{\omega}
\newcommand{\tblue}[1]{{\color{blue}{#1}}}
\newcommand{\bel}[1]{\begin{equation}\label{#1}}
\newcommand{\ee}{\end{equation}}
\title[Increasing stability]{High frequency stability estimates for a partial data inverse problem}
\author[Choudhury and Krishnan]{Anupam Pal Choudhury$^{*}$ and Venkateswaran P. Krishnan$^{\sharp} $ } 
\date{}
\address{$^{*}$School of Mathematical Sciences, 
National Institute of Science Education and Research, Bhubaneswar 752050, India, and 
Homi Bhabha National Institute, Training School Complex, Anushaktinagar, Mumbai 400094, India. Email: {\tt anupampcmath@gmail.com} }
\address{$^{\sharp}$ TIFR Centre for Applicable Mathematics, Bangalore, India. Email: {\tt vkrishnan@tifrbng.res.in}}
\keywords{Inverse problems, stability estimates, Schr{\"o}dinger equation, increasing stability\\
}
\begin{document}

\begin{abstract}

In this article, high frequency stability estimates for the determination of the potential in the Schr\"odinger equation are studied when the boundary measurements are made on slightly more than half the boundary. The estimates reflect the increasing stability property with growing frequency.
\end{abstract}
\maketitle

\section{Introduction}
Let $\O\subset \Rb^n$ for $n\geq 3$ be a bounded domain with smooth boundary $\PD\O$. Consider the following boundary value problem in $\O$:
\begin{equation}\label{Elliptic PDE}
\mathcal{L}_{q}u:=\lb -\Delta -\o^2+ q \rb u=0,\quad 
u(x)=f(x) \ \text{on}\ \partial \Omega,
\end{equation}
where we assume, without loss of generality, that the real frequency $\o>1$ and $q\in H^{s}(\O)$ for an integer $s \ge \left[\frac{n}{2}\right]+1$.\\
In order to ensure uniqueness of solution of  the boundary value problem \eqref{Elliptic PDE}, following the works \cite{B,R}, we assume that
\begin{itemize}
\item[(A)]
$0$ is not a Dirichlet eigenvalue of $-\Delta +q $ in $\Omega $,
\end{itemize}
 and that the frequency $\omega $ is such that $0$ is not a Dirichlet eigenvalue of the operator $\mathcal{L}_{q} $ in $\Omega $. In particular, let $\Sigma_{q} $ denote the set of the inverse of eigenvalues of the operator $\left(-\Delta+q \right)^{-1} $. We assume that 
\begin{itemize}
\item[(B)]
$
 \text{dist}\left(\omega^{2},\Sigma_{q} \right) > c\ \omega^{2-n},\ \text{for some}\ c \ll 1.$
\end{itemize}
 For $M>0$ and $s$ as above, let us denote the admissible set of potentials: 
\Beq\label{Admissible Q}
Q_{M}:= \{q:  \norm{q}_{H^{s}(\Omega)} \leq M  \}. 
\Eeq

For $f\in H^{\frac{1}{2}}(\PD\O)$ in \eqref{Elliptic PDE}, let $u \in H^{1}(\Omega)$ be the unique solution of \eqref{Elliptic PDE}. The Dirichlet to Neumann map (DN)  $\Lambda_q: H^{\frac{1}{2}}(\PD\O) \to H^{-\frac{1}{2}}(\PD\O)$ is defined as 
$\partial_{\nu}u \vert_{\PD \O}$.  
We consider DN map restricted to certain open subsets of the boundary.  To precisely describe the set-up, let us introduce a few notation. 

Let $\nu$ denote the unit outer normal to $\PD\O$. Given a unit vector $\alpha \in \mathbb{S}^{n-1} $ and $\epsilon > 0 $, we define
\begin{align}
\notag&\partial \Omega_{+}:= \{x \in \partial \Omega,\ \alpha \cdot \nu(x)>0 \}, \ \partial \Omega_{-}:=\partial \Omega \setminus \overline{\partial \Omega}_{+}\ , \\
\label{Bd2}&\partial \Omega_{+,\epsilon}:= \{x \in \partial \Omega,\ \alpha \cdot \nu(x)>\epsilon \}, \ \partial \Omega_{-,\epsilon}:=\partial \Omega \setminus \overline{\partial \Omega}_{+,\epsilon}\ .
\end{align}
The partial DN map is defined  by $\wt{\Lambda}_q: H^{\frac{1}{2}}(\PD\O) \to H^{-\frac{1}{2}}(\PD\O_{-,\ve})$, $f\to \partial_{\nu}u \vert_{\PD \O_{-,\ve}}$. 

We are interested in the inverse problem of stable recovery of the potential $q$ from partial DN map $\wt{\Lambda}_{q}$. More precisely, we are interested in analyzing the stabilty estimates as $\o$ becomes large.

Corresponding to the case $\o=0$, the inverse problem of unique recovery of $q$ from the DN map $\Lambda_q$ began with the fundamental work of Calder\'on \cite{Calderon-InverseProblemPaper}, and was solved by Sylvester and Uhlmann in dimensions $n\geq 3$ in their groundbreaking work \cite{SU}. The stable recovery of the potential from the DN map was addressed by Alessandrini in \cite{Al} who showed that under an \emph{a priori} assumption of a uniform bound on the potentials, there is a stability estimate with a modulus of continuity of $\ln$ type. That such an estimate is optimal was shown by Mandache \cite{Ma}.

Again in the case $\o=0$, the unique recovery of the potential from partial DN map has received significant attention as well. The work \cite{BU} showed that one can uniquely recover the potential $q$ from the partial DN map $\wt{\Lambda}_q$ defined above. This work was signficantly improved in another fundamental work \cite{KSU}. Heck and Wang derived stability estimates of $\ln \ln$ type (see \cite{Heck-Wang-StabilityPaper}) for the recovery of $q$ from the partial DN map  $\wt{\Lambda}_q$ when the boundary measurements were made on slightly more than half the boundary, and stability estimates of $\ln$ type (see \cite{Heck-Wang-Optimal}) for partial data problems in certain special type of geometries \cite{Isakov-1}.

 For the full data case, the behavior of the stability estimates as the frequency grows was addressed by Isakov \cite{Isakov-2}. He showed that as the frequency $\o$ gets large, the logarithmic-type stability estimates for the full data case improves to Lipschitz-type stability estimates. For other closely related increasing stability works, we refer the reader to the following works of Isakov and his collaborators \cite{ILW,INUW, IW} and also \cite{Is1, Is2, Is-No1, Is-No2, NUW}. 

In the current work, we are interested in the question of analyzing the behavior of the stability estimates as the frequency $\o$ grows for the partial data inverse problem; the recovery of $q$ from $\wt{\Lambda}_q$. Recall from the work of Heck and Wang \cite{Heck-Wang-StabilityPaper} that the stability estimates are of $\ln \ln$ type.  We show that these estimates improve to Lipschitz-type stabililty estimates as the frequency $\o$ becomes large enough.

We would like to mention that the analysis of the behavior of the stability estimates from partial DN map as the frequency grows has been either known only in certain special type of geometries  (see \cite{CH, Liang}) or with impedance type boundary conditions under the assumption of knowledge of the potential in a neighbourhood of the boundary (see \cite{KU}) . In our work, we address this question for the partial data case considered by \cite{BU} and \cite{Heck-Wang-StabilityPaper}. 

%


To study the stability estimates in our set-up, following \cite{Heck-Wang-StabilityPaper}, we shall use a more regular Sobolev space. We shall assume that $f \in H^{\frac{3}{2}}(\partial \Omega) $ and hence the solution $u$ to \eqref{Elliptic PDE} is in $H^{2}(\Omega) $. The partial DN map $\tilde{\Lambda}_{q}$, therefore, now maps $H^{\frac{3}{2}}(\partial \Omega) $ to $H^{\frac{1}{2}}(\partial \Omega_{-,\epsilon}) $. 

We now state the main result of the paper.
\begin{theorem}\label{Main-result}
Let $\O\subset \Rb^n,\ n\geq 3$, be a bounded domain with smooth boundary $\PD \O$. 
Consider \eqref{Elliptic PDE} for two potentials $q_1$ and $q_2$ belonging to the admissible set \eqref{Admissible Q} and satisfying assumption $(A)$. Then there exist constants  $K>1,\ \theta \in (0,1),\ C=C(\O, n, M, K, \theta,\ve, s)$ and $\eta=\eta(s,n)$, such that for all $\omega> 1 $ satisfying the assumption \[\text{dist}\left(\omega^{2},\Sigma_{q_{i}} \right) > c\ \omega^{2-n}\ (\text{for some}\ c \ll 1),\ i=1,2,\] we have
\begin{equation}
\| q_{1}-q_{2}\|_{L^{\infty}(\Omega)}  \leq   C \left[\omega^7 \|\wt{\Lambda}_{q_{1}}-\wt{\Lambda}_{q_{2}} \|+\frac{1}{\left[\frac{1}{K} \ln \left(\ln \ \omega+\vert \ln\ \|\wt{\Lambda}_{q_{1}}-\wt{\Lambda}_{q_{2}} \| \vert \right) \right]^{\frac{2}{\theta}}} \right]^{\frac{\theta \eta}{2(1+s)}},
\label{stab-est}
\end{equation}
where $\wt{\Lambda}_{q_1}$ and $\wt{\Lambda}_{q_2}$ denote the partial DN maps (corresponding to $q_{1} $ and $q_{2} $ respectively) measured on the open subset $\PD \O_{-,\ve}\subset \PD\O$.
The constants $K,\theta,C $ and $\eta$ are independent of the frequency $\omega $.
\end{theorem}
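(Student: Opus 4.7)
My approach follows the Bukhgeim--Uhlmann/Heck--Wang strategy for partial data, but with every estimate tracked as an explicit function of the frequency $\o$, so that the Carleman parameter can be optimized against $\o$ to recover a Lipschitz-type first term at high frequency.

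\textbf{Step 1 (CGO solutions at frequency $\o$).} For $\xi\in\Rb^n$ to be chosen, I would build complex geometric optics solutions to $\mathcal{L}_{q_j}u=0$ of the form
$$u_j(x)=e^{x\cdot \rho_j}\bigl(1+r_j(x)\bigr),\qquad \rho_j\cdot\rho_j=-\o^2,\qquad \rho_1+\rho_2=-i\xi,$$
with $\Re\rho_1=\tau\alpha$, $\Re\rho_2=-\tau\alpha$ for a large parameter $\tau$. The remainders $r_j$ are obtained by inverting the conjugated operator with source $-(q_j-\o^2)$, producing explicit bounds of the form $\|r_j\|_{L^2(\O)}\lesssim \tau^{-1}\o^{a_1}$ and $\|r_j\|_{H^2(\O)}\lesssim \tau\, \o^{a_2}$, where the exponents $a_1,a_2$ come from the zeroth-order term $-\o^2$ in $\mathcal{L}_{q_j}$.

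\textbf{Step 2 (Alessandrini-type identity with partial data defect).} Substituting the CGOs into the standard integration-by-parts identity and using that $\wt{\Lambda}_{q_j}$ are known only on $\PD\O_{-,\ve}$ yields
$$\int_{\O}(q_1-q_2)u_1u_2\,dx\ =\ \bigl\langle (\wt{\Lambda}_{q_1}-\wt{\Lambda}_{q_2})f_1,\,\overline{u_2}\bigr\rangle_{\PD\O_{-,\ve}}\ +\ \int_{\PD\O_{+,\ve}}\PD_\nu u_1\cdot\overline{u_2}\,dS.$$
Modulo the defect on $\PD\O_{+,\ve}$, the left-hand side equals (a perturbation of) $\widehat{(q_1-q_2)}(\xi)$ by construction of $\rho_j$, and the boundary pairing is bounded by $\|\wt{\Lambda}_{q_1}-\wt{\Lambda}_{q_2}\|$ times CGO norms growing like $e^{C\tau}$ and powers of $\o$.

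\textbf{Step 3 (Boundary Carleman estimate).} A boundary Carleman estimate with weight $e^{-\tau\alpha\cdot x}$ applied to $\mathcal{L}_{q_j}$ (treating the $-\o^2$ term as a lower-order perturbation) gives
$$\tau\int_{\PD\O_{+,\ve}}\ve\,|\PD_\nu u_1|^2 e^{-2\tau\alpha\cdot x}\,dS\ \lesssim\ \|\mathcal{L}_{q_1}u_1\|_{L^2(\O)}^2\ +\ \text{controlled boundary terms on }\PD\O_{-,\ve}.$$
Because the explicit form of $u_1$ makes the right-hand side dominated by polynomial powers of $\o$ times $\tau^{-1}$ factors from $r_1$, the partial-data defect is controlled by a term of size $\tau^{-1/2}\o^{b}$. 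Combining Steps 2 and 3, for every $\xi\in\Rb^n$ with $|\xi|\leq R\leq c\tau$ I obtain a bound of the form
$$\bigl|\widehat{(q_1-q_2)}(\xi)\bigr|\ \lesssim\ e^{C\tau}\o^{a}\,\|\wt{\Lambda}_{q_1}-\wt{\Lambda}_{q_2}\|\ +\ \tau^{-1/2}\o^{b}.$$

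\textbf{Step 4 (Low/high frequency splitting and optimization).} Using the admissible bound $\|q_j\|_{H^s(\O)}\leq M$,
$$\|q_1-q_2\|_{L^2(\O)}^2\ \leq\ \int_{|\xi|\leq R}\bigl|\widehat{(q_1-q_2)}(\xi)\bigr|^2\,d\xi\ +\ C R^{-2s}M^2.$$
Substituting the bound from Step 3 yields a quantity depending on $\tau,R,\o$, and $\delta:=\|\wt{\Lambda}_{q_1}-\wt{\Lambda}_{q_2}\|$. The key optimization is to take $\tau$ slightly smaller than $c\ln(\ln\o+|\ln\delta|)$ and $R$ a small multiple of $\tau$; when $\o$ is sufficiently large, the Carleman exponential $e^{C\tau}$ is absorbed into a polynomial power of $\o$ (yielding the $\o^{7}\delta$ first term), and otherwise the $\tau^{-1/2}$ factor produces the double-logarithmic second term $[K^{-1}\ln(\ln\o+|\ln\delta|)]^{-2/\theta}$ inherited, with $\o$-dependence, from \cite{Heck-Wang-StabilityPaper}.

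\textbf{Step 5 ($L^2\to L^\infty$ interpolation).} Sobolev interpolation combined with the a priori bound yields
$$\|q_1-q_2\|_{L^\infty(\O)}\ \lesssim\ \|q_1-q_2\|_{L^2(\O)}^{\frac{\theta\eta}{2(1+s)}}\|q_1-q_2\|_{H^s(\O)}^{1-\frac{\theta\eta}{2(1+s)}},$$
with a suitable $\eta=\eta(s,n)$, producing the outer exponent in \eqref{stab-est}.

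\textbf{Main obstacle.} The principal difficulty is Steps 3--4: one must keep the zeroth-order perturbation $-\o^2$ from corrupting the Carleman construction of the remainders $r_j$ and from inflating the boundary Carleman inequality, so that the resulting $\o$-exponent in the Lipschitz piece is finite and explicit (here $7$). The delicate balance is to choose $\tau$ large enough to kill the partial-data defect via $\tau^{-1/2}$, but small enough (logarithmic in $\o$ and $\delta^{-1}$) that $e^{C\tau}$ does not overwhelm the expected Lipschitz first term; this dual demand is what forces the curious nested logarithm $\ln(\ln\o + |\ln\delta|)$ and the power $2/\theta$ in the second summand.
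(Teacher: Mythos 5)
Your skeleton (CGO solutions, Alessandrini identity with a partial-data defect, a boundary Carleman estimate to control the defect on $\PD\O_{+,\ve}$, frequency splitting, interpolation) is the same as the paper's, but two steps as you describe them would fail. First, in Step 4 you claim the Fourier bound of Step 3 for \emph{all} $\xi$ with $|\xi|\leq R\leq c\tau$ and then split $\|q_1-q_2\|_{L^2}^2$ over the ball $|\xi|\leq R$. With partial data this is not available: the direction $\alpha$ of the Carleman weight is pinned to the inaccessible part of the boundary and can only be perturbed in a small cone, so the estimate on $\widehat{q_1-q_2}(\xi)$ holds only for $\xi$ in a small conic neighborhood $V$ of the hyperplane $\alpha^{\perp}$. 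The paper must therefore pass from $V\cap B(0,\rho)$ to all of $B(0,\rho)$ by analytic continuation (Vessella's theorem, following Heck--Wang), which introduces the factor $e^{n\rho(1-\theta)}\|\widehat{q}\|^{\theta}_{L^\infty(V)}$ with $\theta\in(0,1)$; it is precisely this step, not the optimization of $\tau$ alone, that forces $\lambda=\rho^{(n+2)/\theta}e^{2n\rho(1-\theta)/\theta}$ and hence the nested logarithm $\rho=\frac{1}{K}\ln(\ln\o+|\ln\delta|)$ and the exponent $2/\theta$ in the second summand. With your version (bound valid on the whole ball) the same optimization would give a single-log modulus, so your mechanism does not actually produce the claimed estimate; the reference to inheriting the double log from Heck--Wang is doing the real work and is missing from your argument.

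Second, you treat the $-\o^2$ term as a lower-order perturbation both in constructing the remainders $r_j$ and in the Carleman inequality, and you flag the resulting inflation of constants as the ``main obstacle'' without resolving it. If $\o^2 u$ is absorbed as a perturbation one needs $\tau\gtrsim\o^2$, which destroys the Lipschitz term $\o^{7}\delta$. The paper's resolution is structural, not an absorption: the frequency is put into the phase, $\zeta_j\cdot\zeta_j=\o^2$ with $|\zeta_j|=(\o^2+2\lambda^2)^{1/2}$, so the Sylvester--Uhlmann remainder bound $\|r_j\|_{H^s}\leq C_1|\zeta_j|^{-1}\|q_j\|_{H^s}$ is \emph{uniform in} $\o$ (no factors $\o^{a_1},\o^{a_2}$ as in your Step 1); and in the boundary Carleman estimate the cross terms involving $\lambda^2+\o^2$ reduce to $\int_{\PD\O}(\alpha\cdot\nu)|u|^2\,dS=0$ because $u\in H^1_0(\O)$, so the constants $C,\lambda_0$ are independent of $\o$ with no largeness condition on $\lambda$ relative to $\o$. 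Consequently the defect term in the paper is $C/\sqrt{\lambda}$ with \emph{no} positive power of $\o$, which is essential for increasing stability; your $\tau^{-1/2}\o^{b}$ with unspecified $b>0$ would prevent the second term from decaying as $\o\to\infty$. Finally, a minor point: the interpolation should be run between $H^{-1}$ (the norm naturally controlled by the weighted Fourier splitting) and $H^{s}$, together with $H^{n/2+\eta}\hookrightarrow L^\infty$, and the regime $\|\wt{\Lambda}_{q_1}-\wt{\Lambda}_{q_2}\|\geq\delta$ must be handled separately, which your proposal omits.
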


The estimate \eqref{stab-est} clearly shows that as the frequency $\omega $ grows, the second term in the right-hand side decays to zero and the first term, which is the Lipschitz part, dominates. Thus the property of increasing stability is exhibited in this case.

In Section 2, we discuss some preliminary results that we shall need in the proof of the estimates. Section 3 deals with the proof of the stability estimate \eqref{stab-est}.

\section{Preliminaries}
In this section, we collect some preliminary results that will be used in the proof of Theorem \ref{Main-result}. \\
We begin with the derivation of the following boundary Carleman estimate. The proof closely follows \cite{BU} but the main point to note here is that the constants appearing in the estimate are independent of the frequency $\omega$.

\begin{theorem}\label{BCE}
Let $q$ in \eqref{Elliptic PDE} belong to $L^{\infty}(\Omega)$ and $\alpha$ be a unit vector in $\mathbb{R}^{n}$. Define $\varphi(x)=\alpha\cdot x$. Then there exist constants $C>0$ and $\lambda_{0}>0$ (both independent of $\omega$ and depending only on the domain $\Omega $ and $\|q \|_{L^{\infty}(\Omega)} $) such that for all $\lambda >\lambda_0$ and for all $u\in H^{2}(\Omega) \cap H^{1}_{0}(\Omega)$,
\begin{align}\label{carleman}
-\frac{1}{ \lambda } \int_{\partial \Omega_{-}} \left( \alpha \cdot \nu \right) \partial_{\nu}  u \  \overline{\partial_{\nu}u}\ dS+C  \|u \|^{2}_{L^{2}(\Omega)} & \leq \frac{1}{\lambda^2}\lVert e^{\lambda \vp}(\Delta +\o^2-q) e^{-\lambda \vp}u\rVert^2_{L^{2}(\O)} \\
\notag &\qquad +\frac{1}{\lambda} \int_{\partial \Omega_{+}} \left( \alpha \cdot \nu \right)  \partial_{\nu}  u \  \overline{\partial_{\nu}u} \ dS,
\end{align}


\end{theorem}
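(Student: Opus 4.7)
The approach follows the Bukhgeim--Uhlmann strategy: conjugate $(\Delta + \omega^{2} - q)$ by the linear weight $e^{\pm \lambda \varphi}$ with $\varphi(x) = \alpha \cdot x$, split the resulting operator into self-adjoint and skew-adjoint parts, and read off the boundary Carleman estimate from the cross term. Since $\varphi$ is linear, one computes directly that
\[
P_{\lambda} := e^{\lambda \varphi}(\Delta + \omega^{2} - q)\, e^{-\lambda \varphi} = \Delta - 2\lambda\, \alpha \cdot \nabla + (\lambda^{2} + \omega^{2}) - q.
\]
For $q = 0$ I would write $P^{0}_{\lambda} = A + B$, where $A := \Delta + (\lambda^{2} + \omega^{2})$ is self-adjoint and $B := -2\lambda\, \alpha \cdot \nabla$ is skew-adjoint on $H^{2}(\Omega) \cap H^{1}_{0}(\Omega)$, so that
\[
\|P^{0}_{\lambda} u\|^{2}_{L^{2}(\Omega)} = \|A u\|^{2} + \|B u\|^{2} + 2\,\mathrm{Re}\,\langle A u, B u \rangle_{L^{2}(\Omega)}.
\]

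Next I would compute the cross term by integration by parts, using $u \in H^{1}_{0}(\Omega)$, so that $\nabla u = (\partial_{\nu} u)\nu$ on $\partial \Omega$. The $\Delta$ piece of $A$ contributes $-2\lambda \int_{\partial \Omega}(\alpha \cdot \nu) |\partial_{\nu} u|^{2}\, dS$, while the multiplicative piece $(\lambda^{2} + \omega^{2}) u$ generates only the boundary term $\int_{\partial \Omega}(\alpha \cdot \nu) |u|^{2}\, dS$, which vanishes by the Dirichlet condition. This is the crucial point for uniformity in $\omega$: the large parameter $\omega^{2}$ sits harmlessly in $A$ but its would-be boundary contribution to the commutator is killed by $u|_{\partial \Omega} = 0$ before it can propagate into the constants. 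The resulting exact identity is
\[
\|A u\|^{2} + \|B u\|^{2} = \|P^{0}_{\lambda} u\|^{2} + 2\lambda \int_{\partial \Omega}(\alpha \cdot \nu)\, |\partial_{\nu} u|^{2}\, dS.
\]

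To extract a coercive $\|u\|^{2}_{L^{2}}$ term I would use a one-dimensional Poincar\'e inequality in the direction $\alpha$, valid because $\Omega$ is bounded and $u$ vanishes on $\partial \Omega$; this yields $\|B u\|^{2} = 4\lambda^{2}\|\alpha \cdot \nabla u\|^{2} \geq c_{\Omega}\,\lambda^{2}\,\|u\|^{2}$ with $c_{\Omega}$ depending only on the width of $\Omega$ in the direction $\alpha$. Dropping $\|A u\|^{2} \geq 0$, splitting $\int_{\partial \Omega} = \int_{\partial \Omega_{+}} + \int_{\partial \Omega_{-}}$, moving the (non-positive) $\partial \Omega_{-}$ contribution to the left and dividing by $\lambda^{2}$ yields the estimate in the $q = 0$ case. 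The potential is then restored as a perturbation via $\|P^{0}_{\lambda} u\|^{2} \leq 2\|P_{\lambda} u\|^{2} + 2\|q\|^{2}_{L^{\infty}(\Omega)}\|u\|^{2}$, and the parasitic $\|u\|^{2}$ is absorbed into the coercive term on the left provided $\lambda \geq \lambda_{0}$ with $\lambda_{0}$ chosen in terms of $c_{\Omega}$ and $\|q\|_{L^{\infty}(\Omega)}$ only. The main obstacle is bookkeeping rather than technique: one must verify at every step that no factor of $\omega$ sneaks into the boundary contributions, which amounts to systematically exploiting $u|_{\partial \Omega} = 0$ whenever the $(\lambda^{2} + \omega^{2})$ multiplier is integrated by parts.
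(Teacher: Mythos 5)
Your proposal is correct and follows essentially the same route as the paper: conjugation by the linear weight, splitting the conjugated operator into the self-adjoint part $\Delta+\lambda^{2}+\omega^{2}$ and the skew-adjoint part $-2\lambda\,\alpha\cdot\nabla$, computing the cross term by integration by parts so that the $(\lambda^{2}+\omega^{2})$-contribution dies on the boundary via $u|_{\partial\Omega}=0$ and the $\Delta$-contribution reduces to $-2\lambda\int_{\partial\Omega}(\alpha\cdot\nu)\,|\partial_{\nu}u|^{2}\,dS$ (using $\nabla u=(\partial_{\nu}u)\nu$ on $\partial\Omega$), then applying the directional Poincar\'e inequality, restoring $q$ as an $L^{\infty}$ perturbation, and absorbing it for $\lambda\geq\lambda_{0}$. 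The only difference is presentational (the paper carries out the same integrations by parts term by term rather than invoking the $A+B$ formalism), so no gap to report.
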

\begin{proof}
We prove the estimate for $C^{\infty}(\overline{\Omega}) $ such that $u=0 $ on $\partial \Omega $. The general case follows from a standard approximation argument.\\
Note that
\begin{align*}
\lVert e^{\lambda \vp} \lb -\Delta - \o^2\rb  e^{-\lambda \vp} u\lVert^2_{L^{2}(\Omega)} 
& =\lVert\lb  \Delta + \lambda^2+\o^2\rb u\rVert_{L^{2}(\Omega)}^2+4\lambda^2 \lVert \alpha \cdot \nabla u\rVert_{L^{2}(\Omega)}^2\\
&\underbrace{-2\lambda \langle \alpha \cdot \n u,\lb \Delta + \lambda^2+\o^2\rb u\rangle}_{I} 
\underbrace{-2\lambda \langle \lb \Delta+\lambda^2+\o^2\rb u, \alpha \cdot \n u \rangle}_{II}.
\end{align*}
Let us consider the third and fourth terms on the right from the above equation, and split it as 
\[
I+II=-2\lambda\lb  \underbrace{\langle \alpha \cdot \n u, \Delta u\rangle}_{A} + \underbrace{\langle \Delta u, \alpha \cdot \n u\rangle}_{B} + \underbrace{\langle \alpha \cdot \n u, \lb \lambda^2+\o^2\rb u\rangle}_{C}+ \underbrace{\langle \lb \lambda^2+\o^2\rb u,\alpha \cdot \n u\rangle}_{D} \rb.
\]
The third and the fourth expression on the right combine to give
\begin{align*}
-2\lambda \cdot \left(C+D\right)& =-2\lambda \lb \lambda^2+ \o^2\rb \lb \sum\limits_{i=1}^{n} \alpha_{i} \int_{\Omega} \lb \PD_{x_{i}} u\ \overline{u}+ u\ \PD_{x_{i}} \overline{u} \rb d x\rb\\
&=-2\lambda \lb \lambda^2+ \o^2\rb \lb \sum\limits_{i=1}^{n} \alpha_{i} \int_{\Omega} \PD_{ x_{i}}\lb |u|^{2} \rb d x\rb\\
&= -2\lambda\lb \lambda^2+ \o^2\rb  \int\limits_{\PD \O}\left( \alpha \cdot \nu \right) \ |u|^2 \ dS.
\end{align*}
Since $u=0$ on $\PD \O$, we have that $C+D=0$.

Now let us consider the first term: 
\begin{align*}
-2\lambda \cdot A& =-2\lambda\sum\limits_{i=1}^{n} \alpha_{i}\int_{\Omega} \PD_{x_{i}} u\ \Delta \overline{u} \ d x\\
&=2\lambda \sum\limits_{i=1}^{n} \alpha_{i}\int_{\Omega} u \ \PD_{ x_{i}} \lb \Delta \overline{u}\rb d x -2\lambda \int\limits_{\PD \O} \left( \alpha \cdot \nu\right) u \Delta \overline{u} \ dS.\\
\intertext{Note that the second term on the right above is $0$ and the Laplacian and the partial derivative on the first integral can be interchanged. We get}
-2\lambda \cdot A&=2\lambda \sum\limits_{i=1}^{n} \alpha_{i}\int_{\Omega} u\ \Delta \lb \PD_{x_{i}}\overline{u} \rb\ d x\\
&=2\lambda \sum\limits_{i=1}^{n} \alpha_{i} \int_{\Omega} \Delta u \ \PD_{x_{i}} \overline{u}\ d x +{2\lambda \sum\limits_{i=1}^{n} \alpha_{i} \int\limits_{\PD \Omega} u \ \PD_{\nu}\lb  \PD_{x_{i}} \overline{u}\rb \ d S} -2\lambda\sum\limits_{i=1}^{n} \alpha_{i} \int\limits_{\PD \Omega} \PD_{\nu} u \ \PD_{x_{i}} \overline{u}\ d S. 
\intertext{The second expression on the right is $0$ since $u=0$ on $\PD \O$.}
\end{align*}
Therefore we have 
\[
I+II=-2\lambda \int\limits_{\PD \O} \left(\PD_{\nu} u\right) \left( \alpha \cdot \n \overline{u}\right) \ dS.
\]
Now at each point $x\in \PD \O$, let us write 
\[
\alpha = \left( \alpha \cdot \nu(x)\right) \nu(x) + T(x),
\]
where $T(x)$ is a vector field tangent to $\PD \O$ at $x$. Since $u=0$ on $\PD \O$, we have that $ T(x)\cdot \nabla \overline{u}(x) =0$.
Hence we get 
\[
\lVert e^{\lambda \vp} \lb -\Delta - \o^2\rb  e^{-\lambda \vp} u\lVert_{L^{2}(\Omega)}^2= \lVert\lb  \Delta + \lambda^2+\o^2\rb u\rVert_{L^{2}(\Omega)}^2+4 \lambda^2 \lVert \alpha \cdot \nabla u\rVert_{L^{2}(\Omega)}^2-2\lambda \int\limits_{\PD \O} \left( \alpha \cdot \nu\right) \PD_{\nu}u\  \overline{\PD_{\nu}u}\  dS.
\]
Using Poincar\'e  inequality, we have
\begin{align*}
\lVert e^{\lambda \vp}(\Delta +\o^2) e^{-\lambda \vp}u\rVert^2_{L^{2}(\O)} &\geq  4\lambda^2\lVert \alpha \cdot \nabla u\rVert_{L^{2}(\Omega)}^2-2 \lambda  \int_{\partial \Omega} \left( \alpha \cdot \nu \right)  \partial_{\nu}  u \  \overline{\partial_{\nu}u} \ dS\\
&\geq C \lambda^2 \|u \|^{2}_{L^{2}(\Omega)} -2 \lambda  \int_{\partial \Omega} \left( \alpha \cdot \nu \right) \partial_{\nu}  u \  \overline{\partial_{\nu}u}\ dS,
\end{align*}
where the constant $C$ is independent of the frequency $\omega $.\\
We can rewrite the above inequality as
\begin{align*}
-2 \lambda  \int_{\partial \Omega_{-}} \left( \alpha \cdot \nu \right)  \partial_{\nu}  u \   \overline{\partial_{\nu}u} \ dS+C \lambda^2 \|u \|^{2}_{L^{2}(\Omega)} &\leq \lVert e^{\lambda \vp}(\Delta +\o^2) e^{-\lambda \vp}u\rVert^2_{L^{2}(\O)} \\
&+2 \lambda  \int_{\partial \Omega_{+}} \left( \alpha \cdot \nu \right)  \partial_{\nu}  u \  \overline{\partial_{\nu}u}\ dS.
\end{align*}	
Now 	
\begin{equation}
\lVert e^{\lambda \vp}(\Delta +\o^2) e^{-\lambda \vp}u\rVert_{L^{2}(\O)} \leq \lVert e^{\lambda \vp}(\Delta +\o^2-q) e^{-\lambda \vp}u\rVert_{L^{2}(\O)} +\|qu \|_{L^{2}(\Omega)},
\notag
\end{equation}
and hence
\begin{equation}
\begin{aligned}
\lVert e^{\lambda \vp}(\Delta +\o^2) e^{-\lambda \vp}u\rVert^2_{L^{2}(\O)} &\leq 2\lVert e^{\lambda \vp}(\Delta +\o^2-q) e^{-\lambda \vp}u\rVert^2_{L^{2}(\O)} +2\|qu \|^2_{L^{2}(\Omega)} \\
&\leq 2\lVert e^{\lambda \vp}(\Delta +\o^2-q) e^{-\lambda \vp}u\rVert^2_{L^{2}(\O)} +2\| q\|^{2}_{L^{\infty}(\Omega)}\|u \|^2_{L^{2}(\Omega)}.
\end{aligned}
\notag
\end{equation}
Choosing $\lambda $ large enough, we derive
\begin{align*}
&-2 \lambda  \int_{\partial \Omega_{-}} \left( \alpha \cdot \nu \right)  \partial_{\nu}  u \  \overline{\partial_{\nu}u}\ dS+C \lambda^2 \|u \|^{2}_{L^{2}(\Omega)} \leq 2\lVert e^{\lambda \vp}(\Delta +\o^2-q) e^{-\lambda \vp}u\rVert^2_{L^{2}(\O)} +2 \lambda  \int_{\partial \Omega_{+}} \left( \alpha \cdot \nu \right) \partial_{\nu}  u \  \overline{\partial_{\nu}u}\ dS.
\end{align*}	
Now the estimate	
\begin{align*}
&-\frac{1}{ \lambda } \int_{\partial \Omega_{-}} \left( \alpha \cdot \nu \right) \partial_{\nu}  u \  \overline{\partial_{\nu}u}\ dS\ +C  \|u \|^{2}_{L^{2}(\Omega)} \leq \frac{1}{\lambda^2}\lVert e^{\lambda \vp}(\Delta +\o^2-q) e^{-\lambda \vp}u\rVert^2_{L^{2}(\O)} +\frac{1}{\lambda} \int_{\partial \Omega_{+}} \left( \alpha \cdot \nu \right)  \partial_{\nu}  u \  \overline{\partial_{\nu}u} \ dS
\end{align*}	
follows.
\end{proof}

\begin{remark}
Note that we can use the linear Carleman weight $-\vp $  instead of $\vp $ in the previous inequality which would give us
\begin{align*}
&\frac{1}{ \lambda } \int_{\partial \Omega_{+}} \left(\alpha \cdot \nu \right) \partial_{\nu}  u \  \overline{\partial_{\nu}u}\ dS +C  \|u \|^{2}_{L^{2}(\Omega)} \\
&\qquad \qquad \qquad \leq \frac{1}{\lambda^2}\lVert e^{-\lambda \vp}(\Delta +\o^2-q) e^{\lambda \vp}u\rVert^2_{L^{2}(\O)} +\frac{1}{\lambda} \int_{\partial \Omega_{-}} \left(-\alpha \cdot \nu \right) \partial_{\nu}  u \  \overline{\partial_{\nu}u} \ dS.
\end{align*}	
Choosing $\tilde{u}=e^{\lambda \vp}u $, and using the fact that $\tilde{u}=0$ on $\PD\O$, we derive the estimate	
\begin{equation}
\begin{aligned}
&\frac{1}{ \lambda } \left\langle \sqrt{\alpha \cdot \nu } \ e^{-\lambda \vp}\partial_{\nu}  \tilde{u} , \sqrt{\alpha \cdot \nu }\ e^{-\lambda \vp} \partial_{\nu}\tilde{u} \right\rangle_{\partial \Omega_{+}} +C  \|e^{-\lambda \vp}\tilde{u} \|^{2}_{L^{2}(\Omega)} \\
&\qquad \leq \frac{1}{\lambda^2}\lVert e^{-\lambda \vp}(\Delta +\o^2-q) \tilde{u} \rVert^2_{L^{2}(\O)} +\frac{1}{\lambda} \left\langle \sqrt{-(\alpha \cdot \nu) }\ e^{-\lambda \vp}\partial_{\nu}  \tilde{u} ,\sqrt{-(\alpha \cdot \nu) }\ e^{-\lambda \vp} \partial_{\nu}\tilde{u} \right\rangle_{\partial \Omega_{-}}.
\end{aligned}
\label{boundary}
\end{equation}
\end{remark}
The following version of Green's identity can be derived following \cite{Al} and we skip the proof here.
\begin{lemma}
Let $u_{1},u_{2} $ satisfy \eqref{Elliptic PDE}  with $q_{1} $, $q_{2} $ respectively and $v$ satisfy $\mathcal{L}^{*}_{q_{1}}v=0 $.\\
Then 
\begin{equation}
\int_{\Omega} \left(q_{1}-q_{2} \right) u_{2}\overline{v}\ dx=\int_{\partial \Omega} \partial_{\nu}(u_{1}-u_{2}) \overline{v}\ dS.
\label{GR1}
\end{equation}
\end{lemma}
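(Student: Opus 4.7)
The plan is to proceed by a direct Green's-identity computation. First I would observe that since $u_1$ and $u_2$ both satisfy \eqref{Elliptic PDE} with the same Dirichlet data $f$, the difference $w := u_1 - u_2$ vanishes on $\partial \Omega$. Subtracting the two equations and rearranging,
\begin{equation*}
\mathcal{L}_{q_1} w = \mathcal{L}_{q_1} u_1 - \mathcal{L}_{q_1} u_2 = 0 - \bigl(\mathcal{L}_{q_2} u_2 + (q_1 - q_2) u_2\bigr) = -(q_1 - q_2)\, u_2 \quad \text{in } \Omega.
\end{equation*}
Thus the problem reduces to relating the interior integral of $\mathcal{L}_{q_1} w \cdot \overline{v}$ to a boundary integral involving $\partial_\nu w$.

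Next I would apply Green's second identity to the Laplacian pieces of $\mathcal{L}_{q_1}$ and $\mathcal{L}^*_{q_1}$. Concretely, since the zeroth-order multiplication terms $(-\omega^2 + q_1)$ and their formal adjoint cancel when paired under integration against $\overline{v}$ and $w$ respectively, we get
\begin{equation*}
\int_\Omega \mathcal{L}_{q_1} w \,\overline{v}\, dx - \int_\Omega w\, \overline{\mathcal{L}^*_{q_1} v}\, dx = -\int_{\partial \Omega} \partial_\nu w \, \overline{v}\, dS + \int_{\partial \Omega} w\, \overline{\partial_\nu v}\, dS.
\end{equation*}
Since $w = 0$ on $\partial \Omega$ the last boundary integral drops out, and since $\mathcal{L}^*_{q_1} v = 0$ the second interior integral also vanishes. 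Substituting $\mathcal{L}_{q_1} w = -(q_1 - q_2) u_2$ and clearing the overall sign yields exactly \eqref{GR1}.

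There is really no hard step here: the only subtleties are keeping track of the complex conjugation (so that it is $\mathcal{L}^*_{q_1}$ rather than $\mathcal{L}_{q_1}$ that must annihilate $v$), and justifying the integration by parts with the regularity at hand. The latter is standard because $f \in H^{3/2}(\partial \Omega)$ yields $u_1, u_2 \in H^2(\Omega)$ by assumption $(A)$ and the regularity theory, while the adjoint solution $v$ can be taken in $H^2(\Omega)$ as well; under these regularity hypotheses both applications of the divergence theorem are legitimate, and the argument is complete. A density argument (approximating in $H^2$ by smooth functions) would handle the case in which one wishes to start from less regular solutions, mirroring the approach in \cite{Al}.
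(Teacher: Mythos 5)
Your argument is correct: writing $w=u_1-u_2$, noting $w|_{\partial\Omega}=0$ and $\mathcal{L}_{q_1}w=-(q_1-q_2)u_2$, and applying Green's second identity against $\overline{v}$ with $\mathcal{L}^*_{q_1}v=0$ is exactly the standard derivation (following Alessandrini) that the paper invokes while skipping the proof. The signs, the cancellation of the zeroth-order terms, and the $H^2$ regularity justification all check out, so nothing is missing.
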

We shall also use the following result due to Sylvester and Uhlmann (see \cite{SU} and also \cite{ILW,INUW}) on the existence of CGO solutions for \eqref{Elliptic PDE}. 
\begin{theorem}\label{SU-est} Let $s>\frac{n}{2}$ be an integer and $\zeta \in \mathbb{C}^{n} $ satisfy $\zeta \cdot \zeta=\omega^2 $. Then there exist constants $C_1$ and $C_2$ (independent of $\o$ and only depending on $s$ and $\Omega $) such that if $|\zeta|> C_2\lVert q\rVert_{H^{s}(\O)}$, then there exists a solution to \eqref{Elliptic PDE} of the form
\[
u(x) =e^{ix\cdot \zeta}\lb 1+ r(x,\zeta)\rb, 
\]
with $r$ satisfying the following estimate: 
\[
\lVert r\rVert_{H^{s}(\O)}\leq \frac{C_1}{|\zeta|}\lVert q\rVert_{H^{s}(\O)}.
\]
\end{theorem}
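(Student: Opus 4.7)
The plan is a standard Faddeev-type CGO construction. Substituting the ansatz $u(x)=e^{ix\cdot\zeta}(1+r(x,\zeta))$ into \eqref{Elliptic PDE} and computing
\[
-\Delta u-\omega^{2}u \;=\; (\zeta\cdot\zeta-\omega^{2})e^{ix\cdot\zeta}(1+r) \;-\; e^{ix\cdot\zeta}\bigl(\Delta r + 2i\zeta\cdot\nabla r\bigr),
\]
I would use the constraint $\zeta\cdot\zeta=\omega^{2}$ to cancel the first term. The PDE then reduces, after dividing by the exponential, to
\[
(-\Delta - 2i\zeta\cdot\nabla)\,r \;=\; -q(1+r) \qquad \text{in } \Omega.
\]

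The next step is to introduce a right inverse $G_{\zeta}$ for the conjugated Laplacian $P_{\zeta}:=-\Delta-2i\zeta\cdot\nabla$. I would extend the source $q(1+r)$ from $\Omega$ to $\Rb^{n}$ by a bounded Stein-type extension and define $G_{\zeta}$ as the Fourier multiplier with symbol $(|\xi|^{2}-2\zeta\cdot\xi)^{-1}$, then restrict the result back to $\Omega$. The key ingredient is the uniform-in-$\zeta$ bound
\[
\|G_{\zeta}f\|_{H^{s}(\Omega)} \;\leq\; \frac{C}{|\zeta|}\,\|f\|_{H^{s}(\Omega)},
\]
where $C$ depends only on $s$ and $\Omega$. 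This is the high-regularity version of the classical Sylvester--Uhlmann weighted $L^{2}$ estimate, obtained by translating the Fourier variable $\xi\mapsto \xi-\Re\zeta$ (so that, using $\Re\zeta\cdot\Im\zeta = 0$, the shifted symbol becomes $|\eta|^{2}-|\Re\zeta|^{2}-2i\,\Im\zeta\cdot\eta$) and analyzing the degeneracy of this symbol on the codimension-2 set where it vanishes.

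With $G_{\zeta}$ in hand, the reduced PDE becomes the fixed-point equation $r = T_{\zeta}r$ with $T_{\zeta}r := -G_{\zeta}\bigl(q(1+r)\bigr)$. Since $s>n/2$, the space $H^{s}(\Omega)$ is a Banach algebra and $\|q(1+r)\|_{H^{s}}\leq C_{s}\|q\|_{H^{s}}(1+\|r\|_{H^{s}})$. Combining this with the operator estimate gives
\[
\|T_{\zeta}r\|_{H^{s}} \leq \frac{C'}{|\zeta|}\,\|q\|_{H^{s}}\bigl(1+\|r\|_{H^{s}}\bigr), \qquad \|T_{\zeta}r_{1}-T_{\zeta}r_{2}\|_{H^{s}} \leq \frac{C'}{|\zeta|}\,\|q\|_{H^{s}}\,\|r_{1}-r_{2}\|_{H^{s}}.
\]
Choosing $C_{2}>2C'$ in the hypothesis $|\zeta|>C_{2}\|q\|_{H^{s}}$ renders $T_{\zeta}$ a contraction on the closed unit ball of $H^{s}(\Omega)$, and the unique fixed point $r$ satisfies the required bound $\|r\|_{H^{s}(\Omega)}\leq C_{1}|\zeta|^{-1}\|q\|_{H^{s}(\Omega)}$.

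The main obstacle is the uniform-in-$\zeta$ operator bound on $G_{\zeta}$ in the $H^{s}$ topology. The weighted $L^{2}$ estimate is classical, but transferring it to $H^{s}$ while preserving the $|\zeta|^{-1}$ decay requires some care: one must control commutators of $G_{\zeta}$ with up to $s$ derivatives, which is cleanest on all of $\Rb^{n}$ (where $G_{\zeta}$ literally commutes with $\partial_{j}$) and then combined with the continuity of an extension operator $H^{s}(\Omega)\to H^{s}(\Rb^{n})$. Since this is a well-documented construction, I would ultimately invoke \cite{SU,ILW,INUW} for this estimate rather than reproving it.
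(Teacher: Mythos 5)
The paper does not actually prove this statement: it is quoted as a known result, with the proof deferred to \cite{SU,ILW,INUW}. Your outline is the standard construction behind those references (conjugate the equation, invert the operator $-\Delta-2i\zeta\cdot\nabla$ by a Faddeev-type multiplier combined with an extension operator, and close with a contraction argument using the Banach-algebra property of $H^{s}$ for $s>n/2$), and the algebra of the reduction, the orthogonality $\mathrm{Re}\,\zeta\cdot\mathrm{Im}\,\zeta=0$, and the fixed-point bookkeeping (with $C_{2}>2C'$, $C_{1}=2C'$) are all correct; so your proposal is consistent with what the paper relies on.

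One caveat is worth stating explicitly, since it is the only place your sketch could mislead you. The estimate you need is $\lVert G_{\zeta}f\rVert\leq C|\zeta|^{-1}\lVert f\rVert$ with $|\zeta|=(\omega^{2}+2\lambda^{2})^{1/2}$, while $|\mathrm{Im}\,\zeta|=\lambda$ may be much smaller than $|\zeta|$ in the regime the paper actually uses (large $\omega$, moderate $\lambda$). The classical Sylvester--Uhlmann argument for $\zeta\cdot\zeta=0$, applied naively to the shifted symbol $|\eta|^{2}-|\mathrm{Re}\,\zeta|^{2}-2i\,\mathrm{Im}\,\zeta\cdot\eta$, only yields decay of order $1/|\mathrm{Im}\,\zeta|$, because the imaginary part is what keeps the symbol away from zero near its characteristic set. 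The gain from $1/\lambda$ to $1/(\omega^{2}+2\lambda^{2})^{1/2}$ uses the fixed-energy structure $\zeta\cdot\zeta=\omega^{2}$ (the characteristic sphere has radius $(\omega^{2}+\lambda^{2})^{1/2}$, so the symbol degenerates only on a large sphere, as in limiting-absorption/Agmon-type resolvent bounds), and this refinement is exactly what is proved in \cite{ILW,INUW}, not in \cite{SU} alone. Since you ultimately invoke those references for the operator bound--as the paper itself does--your argument is acceptable; just be aware that "the classical weighted $L^{2}$ estimate" by itself would give a weaker conclusion, and the transfer to $H^{s}(\Omega)$ via commuting derivatives on $\mathbb{R}^{n}$ plus an extension operator is the right way to keep the constants depending only on $s$ and $\Omega$.
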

The idea is to choose $\zeta $ suitably and use the above result to infer the existence of CGO solutions $u$ with the remainder term $r$ satisfying the above estimates.\\
\newline
Since we are dealing with the partial data case, suitable analytic continuation results need to be used to derive the stability estimates. We shall use the following analytic continuation result due to Vessella (see \cite{Ve} and also \cite{Heck-Wang-StabilityPaper}). 
\begin{theorem}\label{ves}
Let $D \subset \mathbb{R}^{n} $ be a bounded open connected set such that for a positive number $r_{0} $ the set $D_{r}=\{x \in D: \ d(x,\partial D)>r \} $ is connected for every $r \in [0,r_{0}] $. Let $E \subset D $ be an open set such that $d(E,\partial D)\geq d_{0}>0 $. Let $f$ be an analytic function on $D $ with the property that 
\[\vert D^{\alpha} f(x) \vert \leq \frac{C \alpha !}{\mu^{|\alpha|}} \ \text{for}\ x \in D , \alpha \in (\mathbb{N}\cup \{0\})^{n},\]
where $\mu, C $ are positive numbers. Then \[\vert f(x) \vert \leq (2C)^{1-\gamma_{1}\left(\frac{\vert E \vert}{\vert D \vert} \right)} \left(\sup_{E} \vert f(x)\vert \right)^{\gamma_{1}\left(\frac{\vert E \vert}{\vert D \vert} \right)}, \]
where $\vert E \vert $ and $\vert D \vert $ denote the Lebesgue measure of $E $ and $D $, respectively, $\gamma_{1} \in (0,1)$ and $\gamma_{1}$ depends only on $d_{0}, \text{diam}(D), n, r_{0},\mu $ and $d(x,\partial D) $.
\end{theorem}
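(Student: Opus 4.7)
The plan is to follow the partial-data strategy of Heck--Wang \cite{Heck-Wang-StabilityPaper} while tracking the $\omega$-dependence of every constant, and then graft on the frequency-splitting idea of Isakov \cite{Isakov-2} to upgrade the $\ln\ln$-type bound into the mixed Lipschitz/log estimate \eqref{stab-est}. I would start from Alessandrini's identity \eqref{GR1}, applied to CGO solutions of $\mathcal{L}_{q_j}u_j=0$ and of the adjoint $\mathcal{L}^{*}_{q_1}v=0$. For a given $\xi\in\R^n$ and a parameter $\tau$, choose $a,b,\xi/|\xi|$ mutually orthonormal in $\R^n$ and set
\[
\zeta_1=-\tfrac{i\xi}{2}+\tau a+i\sqrt{\tau^{2}-\omega^{2}-\tfrac{|\xi|^{2}}{4}}\,b,\qquad \zeta_2=-\tfrac{i\xi}{2}-\tau a-i\sqrt{\tau^{2}-\omega^{2}-\tfrac{|\xi|^{2}}{4}}\,b,
\]
so that $\zeta_1+\zeta_2=-i\xi$ and $\zeta_j\cdot\zeta_j=\omega^{2}$. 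For $\tau\gtrsim \omega+|\xi|+M$, Theorem \ref{SU-est} supplies CGOs $u_j=e^{ix\cdot\zeta_j}(1+r_j)$ with $\|r_j\|_{H^{s}(\Omega)}\le CM/\tau$, and a conjugate CGO $v$ for $\mathcal{L}^{*}_{q_1}$ with the same remainder bound. Substituting $u_2$ and $v$ into \eqref{GR1} produces, after the usual expansion,
\[
\widehat{(q_1-q_2)}(\xi)=\int_{\partial\Omega}\partial_\nu(u_1-u_2)\,\overline v\,dS+O(M^{2}/\tau).
\]

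\textbf{Absorbing the inaccessible boundary piece.} I would split the boundary integral over $\partial\Omega_{-,\varepsilon}\cup\partial\Omega_{+,\varepsilon}$. On $\partial\Omega_{-,\varepsilon}$ it is directly dominated by $\|\wt\Lambda_{q_1}-\wt\Lambda_{q_2}\|$ times the $H^{3/2}\!\to\! H^{1/2}$ trace norms of the CGOs, which grow like $\omega^{C}e^{C\tau}$; the polynomial factor $\omega^{C}$ will be $\omega^{7}$ in the final accounting, matching the Lipschitz prefactor in \eqref{stab-est}. The remaining integral over $\partial\Omega_{+,\varepsilon}$ is then absorbed using the boundary Carleman estimate in the form \eqref{boundary}, applied to the conjugate CGO $v$ with weight $-\varphi$: since $\alpha\cdot\nu>\varepsilon$ on $\partial\Omega_{+,\varepsilon}$ and $\mathcal{L}^{*}_{q_1}v=0$ makes the volume term vanish, the $\partial\Omega_{+,\varepsilon}$-contribution is controlled by the $\partial\Omega_{-}$-boundary term on the right of \eqref{boundary}, which is once again accessible and so falls back onto the DN-difference. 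The $\omega$-independence of all constants in \eqref{boundary}, guaranteed by Theorem \ref{BCE}, is precisely what keeps the resulting bound sharp in $\omega$. Net result: a pointwise Fourier estimate of the shape
\[
|\widehat{(q_1-q_2)}(\xi)|\le C\,\omega^{7}e^{C\tau}\|\wt\Lambda_{q_1}-\wt\Lambda_{q_2}\|+\frac{CM^{2}}{\tau},\qquad |\xi|\le\rho,
\]
valid on a ball of radius $\rho$ comparable to $\tau$ as soon as $\tau\gg\omega$.

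\textbf{Propagation and low/high-frequency split.} Because $q_1-q_2$ has compact support in $\Omega$, its Fourier transform is entire with $|D^{\alpha}\widehat{(q_1-q_2)}(\xi)|\le(\mathrm{diam}\,\Omega)^{|\alpha|}M$, so Theorem \ref{ves} applies with $D$ and $E$ chosen as concentric balls of radii $R$ and $\rho$ in $\R^{n}$, and propagates the Fourier bound to $\{|\xi|\le R\}$ at a cost that is essentially logarithmic in $R/\rho$, introducing the exponent $\theta=\gamma_{1}(|E|/|D|)\in(0,1)$ and the constant $K$ of \eqref{stab-est}. The standard low/high-frequency inequality
\[
\|q_1-q_2\|_{L^{2}(\Omega)}^{2}\le\int_{|\xi|\le R}|\widehat{(q_1-q_2)}(\xi)|^{2}d\xi+R^{-2s}\|q_1-q_2\|_{H^{s}(\Omega)}^{2},
\]
combined with the Sobolev interpolation $\|f\|_{L^{\infty}(\Omega)}\le C\|f\|_{L^{2}(\Omega)}^{\eta}\|f\|_{H^{s}(\Omega)}^{1-\eta}$ (available for $s>n/2$ with $\eta=\eta(s,n)$), converts the $L^{2}$ bound into the claimed $L^{\infty}$ estimate.

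\textbf{Main obstacle.} The delicate step is the final quantitative balancing. The Fourier estimate loses $e^{C\tau}$ on the DN term while gaining only $1/\tau$ in the CGO remainder, and Vessella's propagation raises this to the power $\theta$ and multiplies by a log in $R/\rho$. To obtain the Lipschitz/log shape of \eqref{stab-est} I would choose $\tau$ of order $\ln\!\bigl(\ln\omega+|\ln\|\wt\Lambda_{q_1}-\wt\Lambda_{q_2}\||\bigr)$, so that $e^{C\tau}$ becomes only polynomial in this compound logarithm and the DN contribution stays Lipschitz (multiplied by $\omega^{7}$), while $1/\tau$ produces the $[\frac{1}{K}\ln(\cdots)]^{-2/\theta}$ term after raising to the Vessella power and squaring for the $L^{2}$ norm. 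Optimizing $R$ in the low/high-frequency split then composes the Sobolev interpolation exponent $\eta$ with the Vessella exponent $\theta$ and the $H^{s}$-to-$L^{2}$ gain $R^{-2s}$, producing the overall exponent $\theta\eta/(2(1+s))$ in the statement.
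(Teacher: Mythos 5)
Your proposal does not address the statement you were asked to prove. The statement is Theorem \ref{ves}, Vessella's quantitative analytic continuation result: a propagation-of-smallness estimate asserting that an analytic function on $D$ whose derivatives satisfy the Cauchy-type bounds $\vert D^{\alpha}f\vert\le C\alpha!/\mu^{\vert\alpha\vert}$, and which is small on a subset $E$ well inside $D$, is small throughout $D$ with a H\"older exponent $\gamma_{1}$. What you have written instead is a strategy for the paper's main stability theorem (Theorem \ref{Main-result}): CGO solutions, the boundary Carleman estimate, Alessandrini's identity, the low/high frequency split, and the parameter balancing. None of that machinery is relevant to proving Theorem \ref{ves}, which is a statement purely about analytic functions on a domain in $\mathbb{R}^{n}$ and makes no reference to the Schr\"odinger equation or the DN map. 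Worse, your argument explicitly invokes Theorem \ref{ves} as a tool (``Theorem \ref{ves} applies with $D$ and $E$ chosen as concentric balls\dots''), so read as a proof of that theorem it is circular.

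For the record, the paper does not prove Theorem \ref{ves} either; it imports it from Vessella's paper \cite{Ve} (see also \cite{Heck-Wang-StabilityPaper}) and states it without proof. An actual proof would proceed along entirely different lines: from the derivative bounds one gets that $f$ extends holomorphically to a uniform complex neighbourhood of each point of $D$ with radius comparable to $\mu$; one then controls the derivatives of $f$ at points of $E$ by $\sup_{E}\vert f\vert$ and the a priori constant $C$, and propagates smallness from $E$ to an arbitrary $x\in D$ by iterating a two-constants (Hadamard three-spheres / doubling) inequality along a chain of overlapping balls inside $D_{r}$, using the connectedness hypothesis on $D_{r}$ to guarantee such chains exist with length controlled by $\mathrm{diam}(D)$, $r_{0}$, $d_{0}$ and $d(x,\partial D)$; this iteration is what produces the exponent $\gamma_{1}\in(0,1)$ with the stated dependencies. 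If your task was in fact to prove the main theorem, your sketch is broadly consonant with the paper's argument, but as a proof of the quoted statement it is simply the wrong target.
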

\section{Stability estimates}
In this section, we prove Theorem \ref{Main-result}. We introduce suitable CGO solutions as follows. 

Let
\begin{align}
\label{z1}\zeta_{1}&=\frac{1}{2} \xi +i \lambda\alpha- \left(\omega^2+\lambda^2-\frac{\vert \xi \vert^2}{4} \right)^{\frac{1}{2}} \beta, \\
\label{z2}\zeta_{2}&=-\frac{1}{2} \xi -i \lambda \alpha- \left(\omega^2+\lambda^{2}-\frac{\vert \xi \vert^2}{4} \right)^{\frac{1}{2}} \beta ,
\end{align}
for $\omega^2+\lambda^2 > \frac{\vert \xi \vert^2}{4} $. 
Then $\zeta_j \cdot \zeta_j=\omega^2$. Using theorem \ref{SU-est}, provided $\vert  \zeta_{j} \vert > C_{2} \| q_{j} \|_{H^{s}(\Omega)} $, we have solutions $ v$ and $u_{2} $ to $\mathcal{L}^{*}_{q_1} v=0  $ and $\mathcal{L}_{q_2} u_{2}=0 $ of the form
\begin{equation}
\begin{aligned}
v(x)&=e^{i \zeta_{1}\cdot x}(1+r_{1}(x,\zeta_{1};\lambda)), \quad 
u_{2}(x)= e^{i \zeta_{2}\cdot x}(1+r_{2}(x,\zeta_{2};\lambda)),
\end{aligned}
\notag
\end{equation}
where the remainder terms $r_{j},\ j=1,2 $ satisfy the estimates
\begin{equation}
\|r_{j} \|_{H^{s}(\Omega)} \leq \frac{C_{1}}{\vert  \zeta_{j} \vert} \| q_{j} \|_{H^{s}(\Omega)}.
\label{SU}
\end{equation}

Note that $\vert \zeta_{j} \vert= \left(\omega^{2}+2\lambda^{2} \right)^{\frac{1}{2}} $.
Therefore provided $\left(\omega^{2}+2\lambda^{2} \right)^{\frac{1}{2}}> C_{2}M $, we have the estimate 
\begin{equation}
\|r_{j} \|_{H^{s}(\Omega)} \leq \frac{C_{1}}{\vert  \zeta_{j} \vert} \| q_{j} \|_{H^{s}(\Omega)} \leq \frac{C_{1} M}{C_{2} M} \leq C.
\label{SU-1}
\end{equation}
We rewrite \eqref{GR1} as
\begin{equation}
\begin{aligned}
&\int_{\Omega} \left(q_{1}-q_{2} \right) u_{2}\overline{v}\ dx=\int_{\partial \Omega_{-,\epsilon}} \partial_{\nu}(u_{1}-u_{2}) \overline{v}\ dS+ \int_{\partial \Omega_{+,\epsilon}} \partial_{\nu}(u_{1}-u_{2}) \overline{v}\ dS.
\end{aligned}
\label{GR2}
\end{equation}
We can estimate the $H^{1}$ and $H^{2}$ norms of $v$ and $u_{2}$ in the following manner.

Let $R\geq 1$ be such that $\Omega \subset B(0,R) $. {Then since $\vert e^{i\zeta_{j} \cdot x}\vert \leq e^{\vert \mathrm{Im} \ \zeta_{j} \vert \vert x \vert} \leq e^{\lambda R} $, we have
\begin{equation}
\begin{aligned}
\|v \|_{H^{1}(\Omega)} &\leq \|e^{i \zeta_{1} \cdot x}(1+r_{1}) \|_{L^{2}(\Omega)}+ \sum_{k=1}^{n} \|e^{i \zeta_{1} \cdot x} \partial_{x_{k}} r_{1}+ i \zeta_{1k} e^{i \zeta_{1}\cdot x}(1+r_{1})\|_{L^{2}(\Omega)}\\
&\leq C \vert \zeta_{1}\vert \vert e^{i \zeta_{1} \cdot x}\vert \|1+r_{1} \|_{H^{1}(\Omega)}\\
&\leq C \left( \omega^2+2\lambda^2\right)^{\frac{1}{2}} e^{\lambda R } \|1+r_{1} \|_{H^{s}(\Omega)} \\
&\leq C \left( \omega^2+2\lambda^2\right)^{\frac{1}{2}} e^{\lambda R } \quad \text{(using \eqref{SU-1})}
\end{aligned}
\end{equation}
and similarly
\begin{equation}
\begin{aligned}
\|v \|_{H^{2}(\Omega)}&\leq C \left( \omega^2+2\lambda^2\right) e^{\lambda R}.
\end{aligned}    
\label{SU-2}
\end{equation}
Using these, we estimate the terms in the right-hand side of \eqref{GR2} as follows. For the integral over $\partial \Omega_{-,\epsilon} $ we note that
\begin{equation}
\begin{aligned}
\left\vert \int_{\partial \Omega_{-,\epsilon}} \partial_{\nu}(u_{1}-u_{2}) \overline{v}\ dS \right\vert &\leq \|\partial_{\nu}(u_{1}-u_{2}) \|_{L^{2}(\partial \Omega_{-,\epsilon})} \|v \|_{L^{2}(\partial \Omega_{-,\epsilon})} \\
&\leq C\|\partial_{\nu}(u_{1}-u_{2}) \|_{H^{\frac{1}{2}}(\partial \Omega_{-,\epsilon})} \|v \|_{H^{\frac{1}{2}}(\partial \Omega_{-,\epsilon})} \\
&\leq C \|\left(\wt{\Lambda}_{q_{1}}-\wt{\Lambda}_{q_{2}}\right) (f) \|_{H^{\frac{1}{2}}(\partial \Omega)} \|v \|_{H^{1}(\Omega)}\\
&\leq C \|\wt{\Lambda}_{q_{1}}-\wt{\Lambda}_{q_{2}} \| \| f\|_{H^{\frac{3}{2}}(\partial \Omega)} \| v\|_{H^{1}(\Omega)} \\
&\leq C \|\wt{\Lambda}_{q_{1}}-\wt{\Lambda}_{q_{2}} \| \| u_{2}\|_{H^{2}( \Omega)} \| v\|_{H^{1}(\Omega)}\\
&\leq C \left( \omega^2+2\lambda^2\right)^{\frac{3}{2}} e^{2\lambda R }\ \|\wt{\Lambda}_{q_{1}}-\wt{\Lambda}_{q_{2}} \| .
\end{aligned}
\label{est-99}
\end{equation}

To estimate the integral over $\partial \Omega_{+,\epsilon} $, we shall use the boundary Carleman estimate \eqref{boundary}. First of all, we note that 
\begin{equation}
\begin{aligned}
\left\vert \int_{\partial \Omega_{+,\epsilon}} \partial_{\nu}(u_{1}-u_{2}) \overline{v}\ dS \right\vert &=\left\vert  \int_{\partial \Omega_{+,\epsilon}} e^{-\lambda \alpha \cdot x}\  \partial_{\nu}(u_{1}-u_{2}) \ e^{\lambda \alpha \cdot x} \ \overline{v}\ dS \right\vert \\
&\leq \| e^{-\lambda \alpha \cdot x}\  \partial_{\nu}(u_{1}-u_{2})  \|_{L^{2}(\partial \Omega_{+,\epsilon})} \|e^{\lambda \alpha \cdot x} \ \overline{v} \|_{L^{2}(\partial \Omega_{+,\epsilon})}.
\end{aligned}
\label{est-100}
\end{equation}
Now
\begin{equation}
\begin{aligned}
&\zeta_{1}=\frac{1}{2} \xi +i \lambda\alpha- \left(\omega^2+\lambda^2-\frac{\vert \xi \vert^2}{4} \right)^{\frac{1}{2}} \beta \\
&\overline{\zeta}_{1}=\frac{1}{2} \xi -i \lambda\alpha- \left(\omega^2+\lambda^2-\frac{\vert \xi \vert^2}{4} \right)^{\frac{1}{2}} \beta \\
&\overline{i} \overline{\zeta}_{1}=-i \left[ \frac{1}{2} \xi -i \lambda\alpha- \left(\omega^2+\lambda^2-\frac{\vert \xi \vert^2}{4} \right)^{\frac{1}{2}} \beta \right]=-\lambda \alpha +i \left[-\frac{1}{2} \xi +\left(\omega^2+\lambda^2-\frac{\vert \xi \vert^2}{4} \right)^{\frac{1}{2}}\beta \right].
\end{aligned}
\notag
\end{equation}
Therefore
\begin{equation}
\begin{aligned}
e^{\lambda \alpha \cdot x} \overline{v}=e^{\lambda \alpha \cdot x} \overline{e^{i \zeta_{1} \cdot x}} \left( 1+\overline{r}_{1}(x,\zeta_{1}; \lambda)\right)= e^{i \left[-\frac{1}{2} \xi +\left(\omega^2+\lambda^2-\frac{\vert \xi \vert^2}{4} \right)^{\frac{1}{2}} \beta \right]\cdot x} \left( 1+\overline{r}_{1}(x,\zeta_{1}; \lambda)\right),
\end{aligned}
\notag
\end{equation}
and
\[\| e^{\lambda \alpha \cdot x} \overline{v}\|_{L^{2}(\partial \Omega_{+,\epsilon})}=\|1+\overline{r}_{1}(x,\zeta_{1};\lambda) \|_{L^{2}(\partial \Omega_{+,\epsilon})}. \]
Using \eqref{SU} and trace theorem, 
\begin{equation}
\begin{aligned}
&\| \overline{r_1}\|_{L^{2}(\partial \Omega_{+,\epsilon})}\leq \| \overline{r_1}\|_{L^{2}(\partial \Omega_{+})} \leq C \| \overline{r_1}\|_{H^{1}(\Omega)} \leq \frac{C}{\vert \zeta_{1}\vert} \cdot M \leq C,
\end{aligned}
\notag
\end{equation}
where we use the fact that $\vert \zeta_{1} \vert=\left(\omega^{2}+2\lambda^{2} \right)^{\frac{1}{2}} > 1 $.\\
Using this in \eqref{est-100}, we have
\begin{equation}
\left\vert \int_{\partial \Omega_{+,\epsilon}} \partial_{\nu}(u_{1}-u_{2}) \overline{v}\ dS \right\vert \leq C \| e^{-\lambda \alpha \cdot x}\  \partial_{\nu}(u_{1}-u_{2})  \|_{L^{2}(\partial \Omega_{+,\epsilon})}.
\label{est-101}
\end{equation}
From the boundary Carleman estimate \eqref{boundary}, we have 
\begin{align*}
\frac{1}{\lambda} \|\sqrt{\alpha \cdot \nu}\ e^{-\lambda \vp} \ \partial_{\nu} \tilde{u} \|^{2}_{L^{2}(\partial \Omega_{+})}+C \|e^{-\lambda \varphi} \tilde{u} \|^{2}_{L^{2}(\Omega)} & \leq \frac{1}{\lambda^2} \|e^{-\lambda \vp} \left(\Delta+\omega^{2}-q \right)\tilde{u} \|^{2}_{L^{2}(\Omega)}\\
&\quad +\frac{1}{\lambda} \|\sqrt{-(\alpha \cdot \nu)}\ e^{-\lambda \vp} \ \partial_{\nu} \tilde{u} \|^{2}_{L^{2}(\partial \Omega_{-})} .
\end{align*}
This gives,
\begin{align}
\notag & \|\sqrt{\alpha \cdot \nu}\ e^{-\lambda \vp} \ \partial_{\nu} \tilde{u} \|^{2}_{L^{2}(\partial \Omega_{+})} \leq \frac{1}{\lambda} \|e^{-\lambda \vp} \left(\Delta+\omega^{2}-q \right)\tilde{u} \|^{2}_{L^{2}(\Omega)}+ \|\sqrt{-(\alpha \cdot \nu)}\ e^{-\lambda \vp} \ \partial_{\nu} \tilde{u} \|^{2}_{L^{2}(\partial \Omega_{-})} \\
\intertext{from which, we have}
& \|\sqrt{\alpha \cdot \nu}\ e^{-\lambda \vp} \ \partial_{\nu} \tilde{u} \|_{L^{2}(\partial \Omega_{+})} \leq \frac{1}{\sqrt{\lambda}} \|e^{-\lambda \vp} \left(\Delta+\omega^{2}-q \right)\tilde{u} \|_{L^{2}(\Omega)}+ \|\sqrt{-(\alpha \cdot \nu)}\ e^{-\lambda \vp} \ \partial_{\nu} \tilde{u} \|_{L^{2}(\partial \Omega_{-})}.
\label{est-102}
\end{align}
Now on $\partial \Omega_{+,\epsilon} $, we have $\alpha \cdot \nu > \epsilon $ and hence 
\begin{align}
\notag \sqrt{\epsilon} \|e^{-\lambda \vp} \partial_{\nu} \tilde{u} \|_{L^{2}(\partial \Omega_{+,\epsilon})}&\leq \|\sqrt{\alpha \cdot \nu}\ e^{-\lambda \vp} \ \partial_{\nu} \tilde{u} \|_{L^{2}(\partial \Omega_{+})} \\
\notag & \leq \frac{1}{\sqrt{\lambda}} \|e^{-\lambda \vp} \left(\Delta+\omega^{2}-q \right)\tilde{u} \|_{L^{2}(\Omega)}+ \|\sqrt{-(\alpha \cdot \nu)}\ e^{-\lambda \vp} \ \partial_{\nu} \tilde{u} \|_{L^{2}(\partial \Omega_{-})}. \\
\intertext{This gives}
\|e^{-\lambda \vp} \partial_{\nu} \tilde{u} \|_{L^{2}(\partial \Omega_{+,\epsilon})} &\leq \frac{1}{\sqrt{\epsilon}} \left(\frac{1}{\sqrt{\lambda}} \|e^{-\lambda \vp} \left(\Delta+\omega^{2}-q \right)\tilde{u} \|_{L^{2}(\Omega)}+ \sqrt{-\inf_{\partial \Omega_{-}}(\alpha \cdot \nu)}\|\ e^{-\lambda \vp} \ \partial_{\nu} \tilde{u} \|_{L^{2}(\partial \Omega_{-,\epsilon})} \right).
\label{est-103}
\end{align}
Choosing $\tilde{u}=u_{1}-u_{2} $ and $q=q_{1} $ in \eqref{est-103}, from \eqref{est-101} we can infer
\begin{equation}
\begin{aligned}
\left\vert \int_{\partial \Omega_{+,\epsilon}} \partial_{\nu}(u_{1}-u_{2}) \overline{v}\ dS \right\vert &\leq  \frac{C}{\sqrt{\epsilon \lambda}} \|e^{-\lambda \vp} \left(\Delta+\omega^{2}-q_{1} \right) (u_{1}-u_{2}) \|_{L^{2}(\Omega)}\\
&\quad +\frac{C}{\sqrt{\epsilon}} \sqrt{-\inf_{\partial \Omega_{-}}(\alpha \cdot \nu)}\|\ e^{-\lambda \vp} \ \partial_{\nu} (u_{1}-u_{2}) \|_{L^{2}(\partial \Omega_{-,\epsilon})} .
\end{aligned}
\label{est-104}
\end{equation}
Using the facts $\left(\Delta+\omega^{2}-q_{1} \right)(u_{1}-u_{2})=(q_{1}-q_{2})u_{2}, $ and \[e^{-\lambda \alpha \cdot x}u_{2}=e^{i \left[-\frac{1}{2} \xi-\left(\omega^{2}+\lambda^{2}-\frac{\vert \xi \vert^{2}}{4} \right)^{\frac{1}{2}} \beta \right]} \left(1+r_{2}(x,\zeta_{2};\lambda) \right), \] we observe that
\begin{equation}
\|e^{-\lambda \vp} \left(\Delta+\omega^{2}-q_{1} \right) (u_{1}-u_{2}) \|_{L^{2}(\Omega)}= \|e^{-\lambda \vp} (q_{1}-q_{2}) u_{2} \|_{L^{2}(\Omega)}\leq \|(q_{1}-q_{2})\left(1+r_{2}(x,\zeta_{2};\lambda) \right) \|_{L^{2}(\Omega)}\leq C,
\notag
\end{equation}
and using this in \eqref{est-104}, we have
\begin{equation}
\begin{aligned}
\left\vert \int_{\partial \Omega_{+,\epsilon}} \partial_{\nu}(u_{1}-u_{2}) \overline{v}\ dS \right\vert &\leq \frac{C}{\sqrt{\epsilon \lambda}}+ \frac{C}{\sqrt{\epsilon}} \sqrt{-\inf_{\partial \Omega_{-}}(\alpha \cdot \nu)}\|\ e^{-\lambda \vp} \ \partial_{\nu} (u_{1}-u_{2}) \|_{L^{2}(\partial \Omega_{-,\epsilon})} \\
\Rightarrow \left\vert \int_{\partial \Omega_{+,\epsilon}} \partial_{\nu}(u_{1}-u_{2}) \overline{v}\ dS \right\vert &\leq \frac{C}{\sqrt{ \lambda}}+ C e^{R\lambda} \|\partial_{\nu} (u_{1}-u_{2}) \|_{L^{2}(\partial \Omega_{-,\epsilon})} \\
&\leq \frac{C}{\sqrt{ \lambda}}+ Ce^{2\lambda R} \left( \omega^2+2\lambda^2\right) \|\wt{\Lambda}_{q_{1}}-\wt{\Lambda}_{q_{2}} \|, 
\end{aligned}
\label{est-105}
\end{equation}
where the constant now depends on $\epsilon $. 

We extend $q_1$ and $q_2$ to be $0$ outside $\O$. Using these inequalities in \eqref{GR2}, we obtain the estimate 
\begin{align}\label{est-106}
\notag \left\vert \int_{\Rb^n} (q_{1}-q_{2})\ e^{-i\xi \cdot x}\ dx\right\vert & \leq \left\vert \int_{\partial \Omega_{+,\epsilon}} \partial_{\nu}(u_{1}-u_{2}) \overline{v}\ dS \right\vert + \left\vert \int_{\partial \Omega_{-,\epsilon}} \partial_{\nu}(u_{1}-u_{2}) \overline{v}\ dS\right\vert \\
\notag &\qquad + \left\vert \int_{\Omega} (q_{1}-q_{2}) e^{-i \xi \cdot x} \left(\overline{r_{1}}+r_{2}+\overline{r_{1}}r_{2} \right)\ dx \right\vert \\
\notag & \leq \frac{C}{\sqrt{ \lambda}}+ C e^{2\lambda R}\left( \omega^2+2\lambda^2\right)  
\lVert \wt{\Lambda}_{q_{1}}-\wt{\Lambda}_{q_{2}} \rVert +C \left( \omega^2+2\lambda^2\right)^{\frac{3}{2}} e^{2 \lambda R } \|\wt{\Lambda}_{q_{1}}-\wt{\Lambda}_{q_{2}} \| \\
\notag &\qquad +\frac{C}{\left(\omega^{2}+2\lambda^{2} \right)^{\frac{1}{2}}}\\
\notag &\leq C \left(\frac{1}{\sqrt{ \lambda}}+  \left( \omega^2+2\lambda^2\right)^{\frac{3}{2}} e^{2 \lambda R }\ \|\wt{\Lambda}_{q_{1}}-\wt{\Lambda}_{q_{2}} \| +\frac{1}{\left(\omega^{2}+2\lambda^{2} \right)^{\frac{1}{2}}} \right)\\
&\leq C \left( \left( \omega^2+2\lambda^2\right)^{\frac{3}{2}} e^{2 \lambda R } \|\wt{\Lambda}_{q_{1}}-\wt{\Lambda}_{q_{2}} \|+ \frac{1}{\sqrt{\lambda}}\right),
\end{align}
where we use the fact that $ \lambda\leq \left(\omega^{2}+2\lambda^{2}\right)^{1/2} $ for $\lambda \geq 1 $.
We perturb $\alpha$ in a small enough conic neighborhood.  Correspondingly, the vector $\xi$ chosen perpendicular to $\alpha$ would vary in a small conic neighborhood denoted by $V$. Now for all $\xi\in V$,  with the condition that $|\xi|\leq 2\sqrt{\o^2+\lambda^2}$, estimate \eqref{est-106} holds.  We will consider those $\xi$ such that $|\xi|<\lambda$. This would obviously imply that $|\xi|\leq 2\sqrt{\o^2+\lambda^2}$. Hence for all $\xi$ such that $|\xi|\leq \lambda$, \eqref{est-106} holds.



Denoting $q=q_1-q_2$, let us split 
$ \| q \|^{2}_{H^{-1}(\mathbb{R}^{n})}$ as 
\[
\| q \|^{2}_{H^{-1}(\mathbb{R}^{n})}= \left(\int_{\vert \xi \vert<\rho} \frac{\vert \wh{q}(\xi) \vert^2}{1+\vert \xi \vert^2}\ d\xi+\int_{\vert \xi \vert \geq \rho} \frac{\vert \wh{q}(\xi) \vert^2}{1+\vert \xi \vert^2}\ d\xi\right)
\]
with $\rho$ to be chosen later.

The second integral can be estimated as 
\[
\int_{\vert \xi \vert \geq \rho} \frac{\vert \wh{q}(\xi) \vert^2}{1+\vert \xi \vert^2}\ d\xi\leq \frac{1}{1+\rho^2}\int |\wh{q}(\xi)|^2 d \xi\leq \frac{1}{\rho^2}\int |\wh{q}(\xi)|^2 d \xi = \frac{1}{\rho^2}\lVert q\rVert_{L^{2}(\O)}^2\leq \frac{C}{\rho^2}.
\]
The first integral, we estimate using the result by Vessella (see Theorem \ref{ves} above) following the arguments in \cite{Heck-Wang-StabilityPaper}.
We have the following estimate for $\wh{q}(\xi)$ from \cite{Heck-Wang-StabilityPaper} for $\xi \in B(0,\rho)$: 
\[
\lvert \wh{q}(\xi)\rvert \leq C e^{n\rho(1-\theta)}\lVert \wh{q} \rVert_{L^{\infty}(V)}^{\theta},
\]
where $\theta\in (0,1)$ is a positive constant independent of $\o$. Using the above estimate, we have 
\begin{align*}
\int\limits_{\lvert \xi\rvert<\rho} \frac{\lvert \wh{q}(\xi)\rvert^{2}}{1+\lvert \xi\rvert^2} d \xi \leq \|\wh{q} \|^{2}_{L^{\infty}(B(0,\rho))} \int \limits_{|\xi|<\rho} \frac{1}{1+|\xi|^2} d \xi \leq C  \rho^{n} e^{2n\rho (1-\theta)}\| \wh{q}\|^{2\theta}_{L^{\infty}(V\cap B(0,\rho))}.
\end{align*}
Now
\begin{align}
\notag \| q \|^{\frac{2}{\theta}}_{H^{-1}(\mathbb{R}^{n})} & 
\notag \leq C \left(\rho^{\frac{n}{\theta}} e^{2n\rho \frac{1-\theta}{\theta}} \| \wh{q} \|^{2}_{L^{\infty}(V \cap B(0,\rho))}+\frac{1}{\rho^{\frac{2}{\theta}}} \right). \\
\intertext{Using the estimate for the Fourier transform of $q$ from \eqref{est-106}, we get,}
\notag  &\leq  C \left[\rho^{\frac{n}{\theta}} e^{2n\rho \frac{1-\theta}{\theta}} \left( \omega^2+2\lambda^2\right)^{3} e^{4 \lambda R } \|\wt{\Lambda}_{q_{1}}-\wt{\Lambda}_{q_{2}} \|^{2}+\rho^{\frac{n}{\theta}} e^{2n\rho \frac{1-\theta}{\theta}} \cdot \frac{1}{\lambda}+\frac{1}{\rho^{\frac{2}{\theta}}} \right]. \\
\intertext{Using the inequality $(\o^2 + 2\lambda^2)^{3}\leq C(\o^6+ \lambda^6)$, we get,}
\notag &\leq C \left[\rho^{\frac{n}{\theta}} e^{2n\rho \frac{1-\theta}{\theta}} \omega^6  e^{4 \lambda R } \|\wt{\Lambda}_{q_{1}}-\wt{\Lambda}_{q_{2}} \|^{2}+\rho^{\frac{n}{\theta}} e^{2n\rho \frac{1-\theta}{\theta}} \lambda^6  e^{4 \lambda R } \|\wt{\Lambda}_{q_{1}}-\wt{\Lambda}_{q_{2}} \|^{2}+\rho^{\frac{n}{\theta}} e^{2n\rho \frac{1-\theta}{\theta}} \cdot \frac{1}{\lambda}+\frac{1}{\rho^{\frac{2}{\theta}}} \right].\\
\intertext{Using $\lambda^6\leq e^{\lambda R}$, we have,}
\notag &\leq C \left[\rho^{\frac{n}{\theta}} e^{2n\rho \frac{1-\theta}{\theta}} \omega^6  e^{4 \lambda R } \|\wt{\Lambda}_{q_{1}}-\wt{\Lambda}_{q_{2}} \|^{2}+\rho^{\frac{n}{\theta}} e^{2n\rho \frac{1-\theta}{\theta}}   e^{5 \lambda R } \|\wt{\Lambda}_{q_{1}}-\wt{\Lambda}_{q_{2}} \|^{2}+\rho^{\frac{n}{\theta}} e^{2n\rho \frac{1-\theta}{\theta}} \cdot \frac{1}{\lambda}+\frac{1}{\rho^{\frac{2}{\theta}}} \right].\\
\intertext{Combining the first two expressions, using the fact that $\o>1$, we get,}
 &\leq  C \left[\underbrace{\omega^6 \rho^{\frac{n}{\theta}} e^{2n\rho \frac{1-\theta}{\theta}}   e^{5 \lambda R } \|\wt{\Lambda}_{q_{1}}-\wt{\Lambda}_{q_{2}} \|^{2}}_{\mathrm{I}}+\underbrace{\rho^{\frac{n}{\theta}} e^{2n\rho \frac{1-\theta}{\theta}} \cdot \frac{1}{\lambda}}_{\mathrm{II}}+\underbrace{\frac{1}{\rho^{\frac{2}{\theta}}}}_{\mathrm{III}} \right].
\label{est-107}
\end{align}

Let us choose $\lambda$ such that the terms II  and III in \eqref{est-107} are equal. Then
\Beq\label{Lambda}
\lambda=\rho^{\frac{n+2}{\theta}} e^{2n\rho\frac{1-\theta}{\theta}}.
\Eeq
Note that $\lambda$ depends on $\rho$. We need to choose $\rho$ suitably since the choice of $\lambda$ must satisfy certain conditions to apply the Carleman estimate and the CGO solutions guaranteed by theorem \ref{SU-est}.

With this in mind, let us estimate the first term in \eqref{est-107} with the choice of $\lambda$ from \eqref{Lambda}
above. We have 
\begin{align}
\mathrm{I}
&\leq \omega^6 e^{\frac{n}{\theta}\rho} e^{2n\rho \frac{1-\theta}{\theta}}   e^{5R \left[\rho^{\frac{n+2}{\theta}} e^{2n\rho\frac{1-\theta}{\theta}} \right] } \|\wt{\Lambda}_{q_{1}}-\wt{\Lambda}_{q_{2}} \|^{2}, 
\mbox{ where we use the fact that $\rho^{\frac{n}{\theta}}\leq e^{\frac{n}{\theta} \rho}$.} \\
\intertext{Again using, $\frac{n}{\theta}\rho+2n\frac{1-\theta}{\theta}\rho\leq \exp\lb \frac{n}{\theta}\rho+2n\frac{1-\theta}{\theta}\rho\rb$, $5R\leq e^{5R}$ and $\rho^{\frac{n+2}{\theta}}\leq \exp\lb \frac{n+2}{\theta}\rho\rb$, we get,}
&\leq \omega^6 \left[\exp\left\{ \exp\lb \frac{n}{\theta}\rho+2n\frac{1-\theta}{\theta}\rho\rb+\exp{\lb 5R+\frac{n+2}{\theta}\rho+2n\rho \frac{1-\theta}{\theta}\rb}\right\} \|\wt{\Lambda}_{q_{1}}-\wt{\Lambda}_{q_{2}} \|^{2}\right]. \\
\intertext{Since $\rho$ is chosen greater than $1$,}
&\leq \omega^6 \left[\exp\left\{ \exp\lb \frac{n}{\theta}\rho+2n\frac{1-\theta}{\theta}\rho\rb+\exp{\lb 5R\rho+\frac{n+2}{\theta}\rho+2n\rho \frac{1-\theta}{\theta}\rb}\right\} \|\wt{\Lambda}_{q_{1}}-\wt{\Lambda}_{q_{2}} \|^{2}\right]. \\
\intertext{Using $e^A+e^B\leq 1 +e^{A+B}$, we have,}
&\leq C \omega^6 \exp\lb \exp\lb \left(\frac{n}{\theta}+4n\frac{1-\theta}{\theta}+5R+\frac{n+2}{\theta} \right)\rho\rb \rb \|\wt{\Lambda}_{q_{1}}-\wt{\Lambda}_{q_{2}} \|^{2}.\\ 
\intertext{Denoting $K=\left(\frac{n}{\theta}+4n\frac{1-\theta}{\theta}+5R+\frac{n+2}{\theta} \right)$, we rewrite}
&=\ C \omega^6 \exp\lb \exp(K \rho)\rb \|\wt{\Lambda}_{q_{1}}-\wt{\Lambda}_{q_{2}} \|^{2}.
\end{align}
We make the following choice for $\rho$: 
\Beq\label{rho2}
\rho= \frac{1}{K}\ln \left(\ln \ \omega+\vert \ln\ \|\wt{\Lambda}_{q_{1}}-\wt{\Lambda}_{q_{2}} \| \vert \right).
\Eeq
We assume that $\lVert \wt{\Lambda}_{q_{1}}-\wt{\Lambda}_{q_{2}} \rVert$ satisfies the following: 
\[
\lVert \wt{\Lambda}_{q_{1}}-\wt{\Lambda}_{q_{2}} \rVert \leq \exp\lb -\exp\lb K \wt{\lambda}^{1/L}\rb\rb,
\]
where $\wt{\lambda}> \mbox{ max} \lb 1, \lambda_0, C_2 M\rb$ and $L=\frac{3n-2n\theta+2}{\theta}$.
Note that, with this choice, $\lVert \wt{\Lambda}_{q_{1}}-\wt{\Lambda}_{q_{2}} \rVert<1$. 
Then 
\[
\rho = \frac{1}{K}\lb \ln \lb \ln \ \omega+\vert \ln\ \|\wt{\Lambda}_{q_{1}}-\wt{\Lambda}_{q_{2}} \| \vert \rb \rb\geq \frac{1}{K}\lb \ln \lb \vert \ln\ \|\wt{\Lambda}_{q_{1}}-\wt{\Lambda}_{q_{2}} \| \vert \rb \rb \geq \wt{\lambda}^{1/L}.
\]
Now 
\[
\lambda =\rho^{\frac{n+2}{\theta}} e^{2n\rho\frac{1-\theta}{\theta}}\geq  \rho^{\frac{n+2}{\theta}} \rho^{2n\frac{1-\theta}{\theta}} = \rho^{\frac{3n-2n\theta+2}{\theta}} = \rho^{L}, \mbox{ where $L$ was defined above.}
\] 
Then by the above $\lambda\geq \rho^{L}\geq \wt{\lambda} > \lambda_{0}$.  This choice of $\lambda$ is required in the Carleman estimate. 
Then with the inequalities, $\lambda \geq \rho^{L}\geq \wt{\lambda}\geq \lambda_0$, we have $\lambda \geq \lambda_0$.

Also note that if $\xi$ is chosen such that $|\xi|\leq \rho$, since $L$ defined above satisfies $L\geq 1$ and $\lambda \geq 1$, we have 
\[
|\xi|\leq \rho \leq \lambda ^{1/L}\leq \lambda \leq 2\lb \lambda^2 +\o^2\rb.
\]
Hence the vectors \eqref{z1} and \eqref{z2} are well defined, as well as the estimate in \eqref{est-106} can be applied.

Finally,  in Theorem \ref{SU-est}, we require $|\zeta|\geq C_2 \lVert q\rVert_{H^{s}(\O)}$.  Recall that $|\zeta|=\sqrt{\o^2+2\lambda^2}$.  Since we have taken $\wt{\lambda} \geq C_2M$, where $M$ is the bound on the potentials, our choice of $\lambda$ satisfies this inequality as well.

Now going back to the proof of the theorem, we have 
\begin{equation}
\omega^6 \rho^{\frac{n}{\theta}} e^{2n\rho \frac{1-\theta}{\theta}}   e^{5 \lambda R } \|\wt{\Lambda}_{q_{1}}-\wt{\Lambda}_{q_{2}} \|^{2} \leq C \omega^7 \|\wt{\Lambda}_{q_{1}}-\wt{\Lambda}_{q_{2}} \|
\notag
\end{equation}
and using this in \eqref{est-107} we obtain
\begin{equation}
\begin{aligned}
\| q \|^{\frac{2}{\theta}}_{H^{-1}(\O)}\leq \| q \|^{\frac{2}{\theta}}_{H^{-1}(\mathbb{R}^{n})}& \leq  C\left[\omega^7 \|\wt{\Lambda}_{q_{1}}-\wt{\Lambda}_{q_{2}} \|+\frac{1}{\left[\frac{1}{K} \ln \left(\ln \ \omega+\vert \ln\ \|\wt{\Lambda}_{q_{1}}-\wt{\Lambda}_{q_{2}} \| \vert \right) \right]^{\frac{2}{\theta}}} \right],
\end{aligned}
\label{est-113}
\end{equation}
whenever $ \|\tilde{\Lambda}_{q_{1}}-\tilde{\Lambda}_{q_{2}} \|<\delta := \exp\lb -\exp\lb K \wt{\lambda}^{1/L}\rb\rb $.
\newline
The estimate for the case $\|\tilde{\Lambda}_{q_{1}}-\tilde{\Lambda}_{q_{2}} \| \geq \delta $ can be easily deduced as follows. Recall that $\delta $ is independent of $\omega $. We use the continuous inclusions $L^{\infty}(\Omega) \hookrightarrow L^{2}(\Omega) \hookrightarrow H^{-1}(\Omega) $ to derive
\begin{equation}
\| q_{1}-q_{2} \|_{H^{-1}(\Omega)} \leq C \| q_{1}-q_{2} \|_{L^{\infty}(\Omega)} \leq \frac{2CM}{\delta^{\frac{\theta}{2}}} \delta^{\frac{\theta}{2}} \leq \frac{2CM}{\delta^{\frac{\theta}{2}}} \|\tilde{\Lambda}_{q_{1}}-\tilde{\Lambda}_{q_{2}} \|^{\frac{\theta}{2}}.
\label{est-114}   
\end{equation}
The $L^{\infty} $ norm of $q_{1}-q_{2} $ can now be estimated using interpolation. We recall that given $k_{0},k,k_{1}$ satisfying $k_{0}<k_{1} $ and $k=(1-p)k_{0}+pk_{1} $, where $p \in (0,1) $, the interpolation theorem gives the following estimate for the $H^{k} $ norm of a function $f$:
\[\| f\|_{H^{k}(\Omega)} \leq \|f \|_{H^{k_{0}}(\Omega)}^{1-p} \cdot \|f \|_{H^{k_{1}}(\Omega)}^{p} .\] 
To apply this in our case, let $\eta>0 $ be such that $s=\frac{n}{2}+2\eta $. We choose $k_{0}=-1, k_{1}=s $ and $k=\frac{n}{2}+\eta=s-\eta $. Then
\[k=(1-p)k_{0}+pk_{1}, \ \text{where}\ p=\frac{1+s-\eta}{1+s}. \]
Using the Sobolev embedding and the interpolation theorem, we have 
\begin{equation}
\begin{aligned}
\| q_{1}-q_{2}\|_{L^{\infty}(\Omega)} \leq C \|q_{1}-q_{2} \|_{H^{\frac{n}{2}+\eta}(\Omega)}&\leq C \|q_{1}-q_{2} \|_{H^{-1}(\Omega)}^{1-p} \cdot \|q_{1}-q_{2} \|_{H^{s}(\Omega)}^{p} \\
&\leq C \|q_{1}-q_{2} \|_{H^{-1}(\Omega)}^{\frac{\eta}{1+s}}\\
&\leq C \left[\omega^7 \|\wt{\Lambda}_{q_{1}}-\wt{\Lambda}_{q_{2}} \|+\frac{1}{\left[\frac{1}{K} \ln \left(\ln \ \omega+\vert \ln\ \|\wt{\Lambda}_{q_{1}}-\wt{\Lambda}_{q_{2}} \| \vert \right) \right]^{\frac{2}{\theta}}} \right]^{\frac{\theta \eta}{2(1+s)}},
\end{aligned}
\label{est-115}    
\end{equation}
where $C$ is a generic constant that does not depend on $\omega $. This gives the required stability estimate.

\section*{Acknowledgments }
We thank Arpit Babbar for the discussions regarding this problem. The research of the second author was supported by SERB Matrics grant MTR/2017/000837. }

\small

\end{document}